\theoremstyle{plain}
\newtheorem{Thm}{Theorem}
\newtheorem{Pro}[Thm]{Proposition}
\newtheorem{Lem}[Thm]{Lemma}
\begin{document}

\title[New Yamabe-type flow in a manifold]
{New Yamabe-type flow in a compact Riemannian manifold}

\author{Li Ma$^*$}

\address{Li Ma, School of Mathematics and Physics \\
University of Science  and technology Beijing\\
Xueyuan Road 30, Haidian\\
Beijing 100083 \\
China} \email{lma17@ustb.edu.cn}

\address{ Department of Mathematics \\
Henan Normal University \\
Xinxiang, 453007 \\
China}

 \dedicatory{}
\date{Feb. 6th, 2020}

\keywords{Yamabe-type flow, global existence, norm-preserving flow, scalar curvature, asymptotic behavior}
 \subjclass{35K55, 53C21, 58E35 }

\thanks{$^*$ The research of Li Ma is partially supported by the National Natural Science
Foundation of China No. 11771124 and a research grant from USTB, China.}

\begin{abstract}
In this paper, we set up a new Yamabe type flow on
a compact Riemannian manifold  $(M,g)$ of dimension $n\geq 3$. Let $\psi(x)$ be any smooth function on $M$. Let $p=\frac{n+2}{n-2}$ and $c_n=\frac{4(n-1)}{n-2}$.
We study the Yamabe-type flow $u=u(t)$ satisfying
$$
{u_t}=u^{1-p}(c_n\Delta u-\psi(x)u)+r(t)u, \ \ in \ M\times (0,T),\  T>0
$$
with
$$
r(t)=\int_M(c_n|\nabla u|^2+\psi(x)u^2)dv/ \int_Mu^{p+1},
$$
which preserves the $L^{p+1}(M)$-norm and we can show that for any initial metric $u_0>0$, the flow exists globally.
  We also show that in some cases, the global solution converges to a smooth solution to the equation
  $$
  c_n\Delta u-\psi(x)u+r(\infty)u^{p}=0, \ \ on \ M
  $$
  and our result may be considered as a generalization of the result of T.Aubin, Proposition in p.131 in \cite{A82}.
\end{abstract}

 \maketitle

\section{Introduction}
  Nonlocal evolution equations arise naturally from geometry. The most famous one is the normalized Ricci flow preserving the volume \cite{H82}, introduced by R.Hamilton in 1982. The evolutions of planar curves preserving the length or area enclosed \cite{GA} \cite{DM} \cite{MC1} are in this category.  To solve the Yamabe problem from the view point of evolution equation, Hamilton has also proposed the normalized Yamabe flow to approaching a Yamabe metric on a closed manifold.
In this paper, we introduce a Yamabe type flow ( which preserves the $L^{\frac{2n}{n-2}}(M)$-norm, see below for the definition) on a compact Riemannian manifold and study its global existence and convergence in some cases. We point out that some arguments in \cite{Y} and \cite{B05} about Yamabe flow can be used to handle such a general norm-preserving flow.  We also notice that many heat flow methods may be introduced to functionals related to Yamabe problem on $(M, g)$ (\cite{A82} \cite{B97}, \cite{B}, \cite{MC}).

We now introduce a new Yamabe-type flow on
a compact Riemannian manifold  $(M,g)$ of dimension $n\geq 3$. Let $p=\frac{n+2}{n-2}$, $c_n=\frac{4(n-1)}{n-2}$, and let $R(g)$ be the scalar curvature. Assume that $\psi$ is a given smooth function in $M$.
The Yamabe-type flow $u=u(t)$ is defined such that $u(t)$ satisfies the evolution equation
\begin{equation}\label{yamabe}
{u_t}=u^{1-p}(c_n\Delta u-\psi(x)u)+r(t)u, \ \ in \ M\times (0,T),\  T>0
\end{equation}
where
$$
r(t):=r_\psi(t):=\int_M(c_n|\nabla u|^2+\psi(x)u^2)dv/ \int_Mu^{p+1}
$$
with initial data $u(0)>0$. The local in time solution of this problem \eqref{yamabe} is by now standard \cite{M} and can be obtained by the fixed point method or the method such as the implicit function theorem. This flow preserves the norm of the evolving function $u(t)$,
$$
\int_M u(t)^{p+1}dv=\int_M u(0)^{p+1}dv
$$
which may be assumed to be one for simplicity. In fact, we have
\begin{align*}
\frac{1}{p+1}\frac{d}{dt}\int_M u^{p+1}dv&=\int_M u^pu_t   \\
&=\int_M u(c_n\Delta u-\psi(x)u)+ r(t)\int_Mu^{p+1}\\
&=0.
\end{align*}
Hence,
$$
\int_M u(t)^{p+1}dv=\int_M u(0)^{p+1}dv=1.
$$
Since, for $u=u(t)$,
$$
r(t)=\int_M(c_n|\nabla u|^2+\psi(x)u^2)dv, \ \ \int_M u(t)^{p+1}=1,
$$
we have
\begin{align*}
\frac12\frac{d}{dt}r(t)&=\int_M c_n(\nabla u,\nabla u_t)+\psi(x) uu_t   \\
&=\int_M (-c_n\Delta u+\psi(x)u)u_t\\
&=\int_M u^p(-\frac{u_t}{u}+r(t))u_t\\
&=-\int_M u^p\frac{u^2_t}{u}\leq 0.
\end{align*}
So, $r(t)$ is non-increasing in $t$ along the flow.

To understand this flow well, we introduce the pseudo-scalar curvature
\begin{equation}\label{scalar}
R_\psi=u^{-p}(-c_n\Delta u+\psi(x)u).
\end{equation}
Then the equation \eqref{yamabe} can be written as
\begin{equation}\label{yamabe2}
 u_t =u(-R_\psi+r(t)), \ \ in \ M\times (0,T),\  T>0.
\end{equation}
We remark that one may study the flow
\begin{equation}\label{yamabe-open}u_t =-R_\psi u
\end{equation}
 on any complete non-compact Riemannian manifold of dimension $n\geq 0$.

For the evolution problem \eqref{yamabe} on the compact Riemannian manifold $(M,g)$ of dimension $n\geq 0$, we can show that for any initial data $u(0)>0$, the flow exists globally.

\begin{Thm}\label{thm1} For any initial data $u(0)>0$, the Yamabe-type flow $(u(t))$ to \eqref{yamabe} above exists globally and
$$r_\infty=\lim_{t\to \infty} r(t)=r(\infty)$$ exists.
\end{Thm}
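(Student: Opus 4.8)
The plan is to split the statement into the (easy) existence of $r(\infty)$ and the (substantial) global existence. For the first, the computations already displayed show that $r(t)$ is non-increasing, with $\tfrac12 r'(t)=-\int_M u^{p-1}u_t^2\,dv\le 0$, so it suffices to bound $r$ from below. Since $\int_M u(t)^{p+1}\,dv=1$ and $M$ is compact, H\"older's inequality gives $\int_M u^2\,dv\le \mathrm{Vol}(M)^{2/n}$ (here $p+1=\tfrac{2n}{n-2}$), whence $r(t)=\int_M(c_n|\nabla u|^2+\psi u^2)\,dv\ge -(\sup_M|\psi|)\,\mathrm{Vol}(M)^{2/n}$. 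Hence $r(t)$ decreases to a finite limit $r(\infty)$, and integrating the monotonicity identity yields the energy bound $\int_0^\infty\!\!\int_M u^{p-1}u_t^2\,dv\,dt=\tfrac12(r(0)-r(\infty))<\infty$; this is conditional on the flow existing for all $t\ge 0$, which is what must really be proved.

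For global existence, let $[0,T_{\max})$ be the maximal interval of the local solution and argue by contradiction, assuming $T_{\max}<\infty$. Writing \eqref{yamabe} in the form $u_t=u(r(t)-R_\psi)$ with $R_\psi=u^{-p}(-c_n\Delta u+\psi u)$, I note that once one has a two-sided bound $0<c(T)\le u(\cdot,t)\le C(T)$ on $M\times[0,T]$ for every $T<T_{\max}$, the equation is uniformly parabolic with smooth coefficients on $M\times[0,T]$; parabolic Schauder estimates together with a standard bootstrap then control all space-time derivatives of $u$ there, so the solution extends smoothly past $T_{\max}$, contradicting maximality. The whole problem therefore reduces to the pinching $0<c(T)\le u\le C(T)$ on finite time intervals.

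The upper bound is easy. At a point $x_t$ where $u(\cdot,t)$ attains its spatial maximum, $\Delta u(x_t,t)\le 0$, so $R_\psi(x_t,t)\ge \psi u^{1-p}\ge -(\sup_M|\psi|)\,u^{1-p}$, and hence $\tfrac{d}{dt}\max_M u(\cdot,t)\le (\sup_M|\psi|)\,u^{2-p}+r(0)\,u\le C\,(1+\max_M u(\cdot,t))$, where I have used $r(t)\le r(0)$ and, whenever $\max_M u\ge 1$, the elementary bound $u^{2-p}\le 1+u$ (note $2-p=\tfrac{n-6}{n-2}<1$). Gronwall's inequality then gives $\max_M u(\cdot,t)\le C(T)$ on $[0,T]$.

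The lower bound is the main obstacle: the pointwise maximum principle fails, because at a spatial minimum of $u$ the term $-\psi u^{2-p}$ has no favorable sign and blows up as $u\to 0$ when $2-p<0$. I see two routes. The first is to control $R_\psi$ from above on finite intervals. One derives the evolution equation for $R_\psi$ along the conformal metric $g(t)=u^{p-1}g$ --- which has the schematic form $\partial_t R_\psi=c_n\Delta_{g(t)}R_\psi+\tfrac{4}{n-2}R_\psi(R_\psi-r)+(\text{lower-order terms in }\psi,\ \nabla\psi\ \text{and powers of }u)$ --- and combines the maximum principle (or, for the awkward $R_\psi^2$ reaction term, integral estimates and a Moser iteration for $(R_\psi)_+$) with the preserved volume $\int_M u^{p+1}\,dv=1$, the upper bound above and the energy bound, in the spirit of the Yamabe-flow arguments of Ye and Brendle cited in the introduction, to obtain $R_\psi\le \Lambda(T)$ on $[0,T]$; then $u_t=u(r-R_\psi)\ge (r(\infty)-\Lambda(T))\,u$ forces $u(\cdot,t)\ge (\min_M u(0))\,e^{-(\Lambda(T)-r(\infty))t}\ge c(T)>0$. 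The second route is a direct De Giorgi--Moser iteration on $u_t=c_n u^{1-p}\Delta u-\psi u^{2-p}+r u$: testing against $u^{-s}$ produces the favorable term $\int_M u^{-s-p-1}|\nabla u|^2\sim \int_M|\nabla(u^{-(s+p-1)/2})|^2$, which through the Sobolev inequality absorbs the unfavorable term $\int_M\psi u^{-(s+p-1)}$; propagating bounds on $\int_M u^{-s}\,dv$ and iterating $s\to\infty$ with the constants tracked gives $\inf_M u(\cdot,t)\ge c(T)>0$. Either way, this step is the technical heart of the argument and I expect it to consume most of the work; once the pinching is in place the bootstrap of the second paragraph yields $T_{\max}=\infty$, and combined with the first paragraph this proves the theorem.
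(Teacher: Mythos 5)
Your reduction (monotone $r$ bounded below via H\"older, hence $r(\infty)$ exists; global existence reduced to a two-sided pinching $0<c(T)\le u\le C(T)$ followed by a H\"older/Schauder bootstrap) and your upper bound for $u$ match the paper's strategy. The decisive difference is the lower bound on $u$, which you correctly flag as the technical heart but leave as a sketch of two routes — and the paper avoids both by a simpler mechanism. Instead of bounding $R_\psi$ from \emph{above}, the paper bounds it from \emph{below}: writing the evolution of $R_\psi$ and evaluating at a spatial minimum gives $\partial_t R_\psi\ge (p-1)R_\psi(R_\psi-r)$ there, and since $r\ge\min_M R_\psi$ the right side is nonnegative wherever $R_\psi<0$, so $\inf_M R_\psi(t)\ge\min\{\inf_M R_\psi(0),0\}$ — the sign structure makes the maximum principle work cleanly in this direction. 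This lower bound does two jobs at once: it gives $u_t/u=r-R_\psi\le r(0)+\sigma$ (the upper bound on $u$, without your case analysis on $u^{2-p}$), and it turns each time slice $u(\cdot,t)$ into a positive supersolution of $-c_n\Delta u+Pu\ge(R_\psi+\sigma)u^p\ge 0$ with $P$ bounded, so the \emph{elliptic} weak Harnack inequality yields $\int_M u\le C(T)\inf_M u$; combined with $\int_M u^{p+1}=1$ and $\sup_M u\le C(T)$ this pins $\inf_M u\ge c(T)$ with no parabolic iteration at all. By contrast, your route (a) as literally stated would not close: the reaction term in the evolution of $R_\psi$ is quadratic with the wrong sign for an upper bound, so the naive maximum principle only controls $\sup R_\psi$ up to a time of order $1/\sup R_\psi(0)$, and one is forced into the Ye/Brendle-type integral-plus-Moser machinery you parenthetically invoke; your route (b) (Moser iteration on negative powers of $u$) is plausible but unexecuted, and absorbing $\int_M\psi u^{2-p-s}$ requires tracking constants in $s$ carefully. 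So the proposal is structurally sound, but as written it has a genuine gap precisely at the step the paper disposes of with the curvature lower bound plus elliptic Harnack; I would recommend replacing your sketched routes by that argument. (Minor point: before Schauder applies you need an initial H\"older estimate for $u$, since the coefficient $u^{1-p}$ is only as regular as $u$; the paper gets this from Trudinger's divergence-form estimates after substituting $v=u^p$, and Krylov--Safonov would serve equally well under your pinching.)
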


One may give a proof of this result using similar arguments as in section 4 in \cite{Y} or as in \cite{B05}, which is a local in natural argument for the solution $u$ to Yamabe flow. Here, we prefer to give a direct proof to control the norm growth of pseudo-scalar curvature along the flow. In the case when $(M,g)$ is a closed surface, we may also introduce the $\psi$-Gauss flow flow. Let $g(t)=e^{2u(t)}g$, where $u(t):M\to \mathbb{R}$ is a smooth function, and let $\psi:M\to\mathbb{R}$ be a given smooth function. The $\psi$-Gauss flow is defined by 
\begin{equation}\label{gauss}
e^{2u}u_t=\Delta u-\psi(x)+r(t)e^{2u}, \ \ in \ M\times (0,T),\  T>0
\end{equation}
where
$$
r(t):=r_\psi(t):=\int_MK_\psi dv/ \int_Me^{2u}
$$
where $K_\psi =e^{-2u}(-\Delta u+\psi(x))$ with initial data $u(0)>0$. By similar method, we know that there is a global flow for \eqref{gauss}. Interesting questions are to find similar results to Chang-Yang \cite{CY}, \cite{CD}, and Ding-Liu \cite{DL}.
Related Yamabe type flow with boundary data may also be studied.

We can also get the convergence result of the flow $(g(t))$ to \eqref{yamabe} as in the Yamabe-scalar negative and zero cases. So we may define the Yamabe-type invariant below.
Define, for $p=\frac{n+2}{n-2}$, for $u\in H^1(M); u\not=0$,
$$
E(u)=\frac{\int_M(c_n|\nabla u|^2+\psi u^2)dv}{(\int_M |u|^{p+1}dv)^{2/(p+1)}}
$$
and
$$
Y_\psi(M)=\inf\{ E(u), u\in H^1(M); u\not=0\}
$$
which is called the Yamabe-type invariant of $M$. We denote for $M=S^n$ and $\psi=n(n-1)$
$$
Y(S^n)=\inf\{ E(u);u\in H^1(M); u\not=0\}
$$
for the Yamabe constant on $S^n$. Using Aubin's argument (see p.131 in \cite{A82},see also \cite{S} and \cite{E}), we know that $Y_\psi(M)\leq Y(S^n)$. In \cite{A76}, Aubin proved that if $n\geq 4$ and $\psi(x)<R(g)(x)$ somewhere, then there is a minimizer for $Y_psi(M)$. Of course, one may use the argument of Brezis-Nirenberg \cite{BN83} to know
 $Y_\psi(M)<Y(S^n)$ provided $\psi<n(n-1)$ for $M^n=S^n$ with $n\geq 4$.

We define
$$
\lambda_1(\psi)=\inf_{\{u\in H^1(M); u\not=0\}} \frac{\int_M(c_n|\nabla u|^2+\psi u^2)dv}{\int_M |u|^2dv}.
$$
Then it is standard to know that there is a positive function $u_1$ such that
$$
\lambda_1(\psi)=\int_M(c_n|\nabla u_1|^2+\psi u_1^2)dv, \ \ \int_M u_1^{2}dv=1,
$$
and
$$
-c_n\Delta u_1+\psi(x)u_1=\lambda_1(\psi)u_1,\  \ in \ M.
$$
We remark that for $\lambda_1(\psi)\geq 0$, we have $Y_\psi(M)\geq 0$. In fact, for ant $u\not=0$,
\begin{align*}
\frac{\int_M(c_n|\nabla u|^2+\psi u^2)dv}{(\int_M |u|^{p+1}dv)^{2/(p+1)}}&=\frac{\int_M(c_n|\nabla u|^2+\psi u^2)dv}{\int_M u^{2}dv} \frac{\int_M u^{2}dv}{(\int_M |u|^{p+1}dv)^{2/(p+1)}} \\
&\geq \lambda_1(M)\frac{\int_M u_1^{2}dv}{(\int_M |u|^{p+1}dv)^{2/(p+1)}} \geq 0.
\end{align*}
 For $\lambda_1(\psi)< 0$, we have $Y_\psi(M)< 0$. In fact, taking $u=u_1$ above, we have
\begin{align*}
 Y_\psi(M)&\leq \frac{\int_M(c_n|\nabla u|^2+\psi u^2)dv}{\int_M u^{2}dv} \frac{\int_M u^{2}dv}{(\int_M |u|^{p+1}dv)^{2/(p+1)}}\\
  &=\lambda_1(M)\frac{\int_M u^{2}dv}{(\int_M |u|^{p+1}dv)^{2/(p+1)}}<0.
 \end{align*}

The relation between $\lambda_1(M)$ and $Y_\psi(M)$ can be given below.
Since, by the Holder inequality, we have
$$
\int_M |u|^2dv\leq (Vol(M))^{\frac{p-1}{p+1}}(\int_M |u|^{p+1}dv)^{2/(p+1)}.
$$
We then have
$$
Y_\psi(M)\leq (Vol(M))^{-\frac{p-1}{p+1}}\lambda_1(\psi), \
$$
for $\lambda_1(\psi)\geq 0$ and
$$
Y_\psi(M)\geq (Vol(M))^{-\frac{p-1}{p+1}}\lambda_1(\psi),
$$
for $\lambda_1(\psi)< 0$. Note that if $\psi(x)\leq 0$ on $M$, then $\lambda_(M)<0$. In fact, we may take $u=1/\sqrt{vol(M)}$. Then,
$\lambda_1(M)\leq \frac{1}{col(M)}\int_M\psi dv<0$.

Our main result is below.
\begin{Thm}\label{main}
Assume $0<Y_\psi(M)<Y(S^n)$ and assume, for the initial metric $g_0= u_0^{4/(n-2)} g$ with $u_0>0$ on $M$, $E(u_0)\leq Y(S^n)$. Then along the Yamabe-type flow $(u(t))$ to \eqref{yamabe}, we have a convergent subsequence $u(t_j)\to u_\infty>0$, $t_j\to\infty$, and $u_\infty$ is a smooth function satisfies
$$
-c_n\Delta u+\psi(x)u=r_\infty u^{p}, \ \ in \ M, \ \int_M u^{p+1}=1.
$$
\end{Thm}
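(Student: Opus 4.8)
The plan is to run the standard below-threshold Yamabe-flow convergence scheme (in the spirit of \cite{Y}, \cite{B05}): combine the monotonicity and dissipation of $r(t)$ recorded in the introduction with a concentration--compactness analysis of the flow along a well-chosen sequence of times, using the strict gap $Y_\psi(M)<Y(S^n)$ to rule out bubbling. First I would normalize $\int_M u_0^{p+1}\,dv=1$, so that $\int_M u(t)^{p+1}\,dv=1$ and $r(t)=\int_M(c_n|\nabla u|^2+\psi u^2)\,dv=E(u(t))$ for all $t$; then $r(t)\ge Y_\psi(M)>0$, and since $r$ is non-increasing, $Y_\psi(M)\le r(t)\le r(0)=E(u_0)\le Y(S^n)$, so in particular $r_\infty\in[Y_\psi(M),E(u_0)]$. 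Writing $r(t)=c_n\int_M|\nabla u|^2+\int_M\psi u^2$ and using the H\"older bound $\int_M u^2\le\mathrm{Vol}(M)^{(p-1)/(p+1)}$ recorded above, one gets a uniform-in-$t$ bound $\|u(t)\|_{H^1(M)}\le C$. Finally the dissipation identity $-\tfrac12 r'(t)=\int_M u^{p-1}u_t^2\,dv$ from the introduction integrates to $\int_0^\infty\!\!\int_M u^{p-1}u_t^2\,dv\,dt=\tfrac12(r(0)-r_\infty)<\infty$.

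Next I would pick times $t_j\to\infty$ with $\int_M u(t_j)^{p-1}u_t(t_j)^2\,dv\to0$. Setting $u_j:=u(t_j)>0$ (positivity by the maximum principle), $r_j:=r(t_j)\to r_\infty$, and $f_j:=u_j^{p-1}u_t(t_j)$, multiplying \eqref{yamabe} by $u_j^{p-1}$ gives the elliptic relation
$$-c_n\Delta u_j+\psi u_j=r_j u_j^{p}-f_j\qquad\text{on }M.$$
Since $\|u_j^{(p-1)/2}\|_{L^n(M)}=\|u_j\|_{L^{p+1}(M)}^{(p-1)/2}=1$ and $\|u_j^{(p-1)/2}u_t(t_j)\|_{L^2(M)}\to0$, H\"older's inequality gives $\|f_j\|_{L^{2n/(n+2)}(M)}\to0$; as $2n/(n+2)$ is conjugate to $p+1=2n/(n-2)$, also $f_j\to0$ in $H^{-1}(M)$. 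Thus $(u_j)$ is a bounded Palais--Smale-type sequence for $E$ on the constraint $\{\int_M u^{p+1}=1\}$ at the level $r_\infty$.

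The core of the proof is then the concentration--compactness step. After passing to a subsequence, $u_j\rightharpoonup u_\infty\ge0$ in $H^1(M)$, $u_j\to u_\infty$ in $L^2(M)$ and a.e., and by the concentration--compactness lemma,
$$|u_j|^{p+1}dv\rightharpoonup|u_\infty|^{p+1}dv+\sum_i\nu_i\delta_{x_i},\qquad|\nabla u_j|^2dv\rightharpoonup d\mu\ge|\nabla u_\infty|^2dv+\sum_i\mu_i\delta_{x_i}$$
weakly-$*$ as measures, with $c_n\mu_i\ge Y(S^n)\,\nu_i^{2/(p+1)}$ (the sharp Sobolev inequality on $S^n$) and $\int_M|u_\infty|^{p+1}+\sum_i\nu_i=1$. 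Testing the equation against $\varphi_\epsilon^2 u_j$ for cutoffs $\varphi_\epsilon$ concentrating at $x_i$, and using $u_j\to u_\infty$ in $L^2$, $f_j\to0$ in $L^{2n/(n+2)}$, $r_j\to r_\infty$, then letting $\epsilon\to0$, yields the matching identity $c_n\mu_i=r_\infty\,\nu_i$; combining, $\nu_i\ge(Y(S^n)/r_\infty)^{n/2}$. Now if $r_\infty<Y(S^n)$, this lower bound exceeds $1$, which together with $\sum_i\nu_i\le1$ forces the index set empty: there is no concentration, hence $u_j\to u_\infty$ strongly in $L^{p+1}(M)$, so $\int_M u_\infty^{p+1}=1$ and $u_\infty\not\equiv0$; then $u_j^p\to u_\infty^p$ in $L^{(p+1)/p}$ and inverting $-c_n\Delta+\psi+C$ for $C>\|\psi\|_\infty$ upgrades this to $u_j\to u_\infty$ in $H^1(M)$ and shows $u_\infty$ weakly solves $-c_n\Delta u_\infty+\psi u_\infty=r_\infty u_\infty^{p}$. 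In the borderline case $r_\infty=Y(S^n)$, the chain $E(u_0)=r(0)\ge r_\infty=Y(S^n)\ge E(u_0)$ forces $r(t)\equiv Y(S^n)$, hence $r'(t)\equiv0$, hence $u_t\equiv0$, so $u(t)\equiv u_0$ already solves the equation; take $u_\infty=u_0$. Finally a standard Brezis--Kato/Schauder bootstrap --- writing $-c_n\Delta u_\infty=(r_\infty u_\infty^{p-1}-\psi)u_\infty$ with $r_\infty u_\infty^{p-1}-\psi\in L^{n/2}(M)$ --- gives $u_\infty\in C^\infty(M)$, and the strong maximum principle applied to $-c_n\Delta u_\infty+(\|\psi\|_\infty+1)u_\infty\ge0$ gives $u_\infty>0$ on $M$.

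The step I expect to be the main obstacle is precisely the exclusion of bubbling in the concentration--compactness analysis: this is where the hypotheses $0<Y_\psi(M)<Y(S^n)$ and $E(u_0)\le Y(S^n)$ do their work, through the identity $c_n\mu_i=r_\infty\nu_i$ together with the sharp Sobolev constant $Y(S^n)$, which forces each concentration mass $\nu_i$ to be $\ge(Y(S^n)/r_\infty)^{n/2}>1$ when $r_\infty<Y(S^n)$, and through the collapse of the borderline case $r_\infty=Y(S^n)$ to a stationary solution. A subsidiary point needing care is verifying that the error term $f_j=u_j^{p-1}u_t(t_j)$ tends to zero in the dual Sobolev space $L^{2n/(n+2)}(M)$, so that $(u_j)$ is genuinely a Palais--Smale sequence for the constrained functional at level $r_\infty$.
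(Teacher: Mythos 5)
Your proposal is correct and follows the same skeleton as the paper's proof: normalize $\int_M u^{p+1}=1$ so that $r(t)=E(u(t))$ is non-increasing and trapped in $[Y_\psi(M),\,Y(S^n)]$, integrate the dissipation identity $-\tfrac12 r'(t)=\int_M u^{p-1}u_t^2\,dv$ to extract times $t_j$ along which $u_j=u(t_j)$ becomes a Palais--Smale sequence for $E$ at level $r_\infty$ with error $f_j\to0$ in $L^{2n/(n+2)}$ (this is exactly the paper's condition \eqref{ps2}), and then rule out concentration using the strict gap $r_\infty\le E(u_0)\le Y(S^n)$. The one genuine difference is the compactness step: the paper invokes Struwe's global compactness theorem to get the bubble decomposition and the energy quantization $r_\infty=(E(u_\infty)^{n/2}+mY(S^n)^{n/2})^{2/n}$, from which $r_\infty<Y(S^n)$ forces $m=0$; you instead run Lions' concentration--compactness directly, deriving $\nu_i\ge (Y(S^n)/r_\infty)^{n/2}>1$ from the sharp Sobolev constant and the localized energy identity, which contradicts $\sum_i\nu_i\le1$. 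Your route is more self-contained and elementary (no bubble-tree machinery), while the paper's citation of Struwe gives the stronger quantization statement for free. Two small points: your ``matching identity'' should really be the one-sided bound $c_n\mu_i\le r_\infty\nu_i$ (the atom of the limit measure $\mu$ at $x_i$ is only bounded below by $\mu_i$), but the conclusion $\nu_i\ge(Y(S^n)/r_\infty)^{n/2}$ is unaffected; and your explicit treatment of the borderline case $r_\infty=Y(S^n)$ (forcing the flow to be stationary) is actually more careful than the paper, which writes the strict inequality $r(t)<r(0)$ without justifying it when the initial data is already a critical point. Note also that the paper proves the stronger statement $\int_M(R_\psi-r)^2u^{p+1}\,dv\to0$ as $t\to\infty$ via a differential inequality, not merely along a subsequence, but for the subsequential convergence claimed in the theorem your weaker selection suffices.
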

Our result may be considered as a generalization of Proposition in p.131 in \cite{A82}.
We remark that with more detailed analysis (see \cite{BB}), one may obtain similar result to Theorem 1.1 in \cite{B05} and we leave this open for interested readers for pleasure.
Whether the Yamabe-type invariant on $(M,g)$ can be achieved by some smooth function $u>0$ in $M$, generally speaking, is still an open problem and may be discussed in latter chances.

Assume that $\lambda_1(\psi)<0$ and $\psi(x)<0$ on $M$,
we can show that the flow converges at time infinity.

\begin{Thm}\label{thm2}
  Assume that $\lambda_1(\psi)< 0$ and $\psi(x)<0$ on $M$. The Yamabe-type flow $(u(t))$ converges to a metric of constant pseudo-scalar curvature at $t=\infty$.
\end{Thm}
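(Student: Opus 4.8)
\emph{Strategy.} I would treat \eqref{yamabe} as a gradient‑type flow for $E$ and use the strict negativity of $\psi$ to produce \emph{time–uniform} bounds $0<c_1\le u(x,t)\le c_2<\infty$; once these are available, parabolic regularity makes the orbit precompact in $C^{2}(M)$, the energy dissipation forces any limit along $t_j\to\infty$ to solve the limiting equation, and the negativity of $\lambda_1(\psi)$ and of $\psi$ pin down the sign of $r_\infty$ and make that limit unique, so the whole flow converges.

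\emph{Step 1 (energy bookkeeping).} By Theorem~\ref{thm1} the flow is global and $r(t)\downarrow r_\infty$; from the computation preceding it, $\tfrac{d}{dt}r=-2\int_M u^{p-1}u_t^2\,dv$, so $\int_0^\infty\!\!\int_M u^{p-1}u_t^2\,dv\,dt=\tfrac12(r(0)-r_\infty)<\infty$. Since $\lambda_1(\psi)<0$, for every $t$ one has $r(t)=\int_M(c_n|\nabla u|^2+\psi u^2)\,dv\ge\lambda_1(\psi)\int_M u^2\,dv\ge\lambda_1(\psi)\,\mathrm{Vol}(M)^{(p-1)/(p+1)}=:-\Lambda$, using $\int_M u^{p+1}=1$ and Hölder. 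Hence $r(t)\in[-\Lambda,r(0)]$, and together with the norm constraint this gives $\sup_{t\ge0}\|u(t)\|_{H^1(M)}<\infty$.

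\emph{Step 2 (lower bounds via the maximum principle).} Write $\psi\le-\delta<0$ on $M$. For $m(t)=\min_M u(\cdot,t)$, Hamilton's trick applied to \eqref{yamabe} gives, at a spatial minimum ($\Delta u\ge0$), $m'(t)\ge\delta\,m^{2-p}+r(t)m\ge\delta\,m^{2-p}-\Lambda\,m$; the right‑hand side is positive on $(0,(\delta/\Lambda)^{1/(p-1)})$, so this equilibrium is an attracting barrier and $m(t)\ge\min\{m(0),(\delta/\Lambda)^{1/(p-1)}\}>0$ for all $t$. Differentiating \eqref{scalar} along \eqref{yamabe2} one obtains a parabolic equation $\partial_tR_\psi=c_nu^{1-p}\Delta R_\psi+2c_nu^{-p}\nabla u\cdot\nabla R_\psi+(p-1)R_\psi(R_\psi-r(t))$, so the minimum principle yields $\tfrac{d}{dt}\min_M R_\psi\ge(p-1)(\min_M R_\psi)(\min_M R_\psi-r(t))$, and since $r$ is bounded below, $\inf_{M\times[0,\infty)}R_\psi>-\infty$.

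\emph{Step 3 (the main obstacle: uniform upper bound).} The hard part is the time‑uniform $L^\infty$ bound on $u$, i.e.\ excluding concentration of $u(t)$ as $t\to\infty$. I would do this exactly as in the proof of Theorem~\ref{main} and in \cite{B05}: a bubble carries energy at least $Y(S^n)$, but $\lambda_1(\psi)<0$ forces $Y_\psi(M)<0<Y(S^n)$, so we are in the subcritical regime, and the monotonicity $r(t)\downarrow r_\infty$ together with the Struwe–type energy splitting forbids a bubble from forming in the limit. Granting this, $\sup_{t\ge0}\|u(t)\|_{L^\infty(M)}<\infty$ (and then, once one knows $r$ is eventually negative, the maximum principle for $M(t)=\max_M u$, via $M'(t)\le\|\psi\|_\infty M^{2-p}+r(t)M$ with $2-p<1$, gives a sharp barrier $u\le c_2$).

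\emph{Step 4 (convergence).} With $0<c_1\le u\le c_2$ uniformly, \eqref{yamabe} is uniformly parabolic, so Schauder estimates give $\sup_{t\ge1}\|u(t)\|_{C^{2,\alpha}(M)}<\infty$ and the orbit is precompact in $C^2(M)$. From $\int_0^\infty\!\!\int_M u^{p-1}u_t^2<\infty$ and $u\ge c_1$ we get $\int_0^\infty\!\!\int_M u_t^2<\infty$, hence $t_j\to\infty$ with $u(t_j)\to u_\infty$ in $C^2(M)$, $u_\infty\ge c_1>0$, $\int_M u_\infty^{p+1}=1$, $u_t(t_j)\to0$, so $u_\infty$ solves $-c_n\Delta u_\infty+\psi u_\infty=r_\infty u_\infty^p$ and has constant pseudo‑scalar curvature $R_\psi\equiv r_\infty$. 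If $r_\infty\ge0$, then $u_\infty>0$ solves $-c_n\Delta u_\infty+(\psi-r_\infty u_\infty^{p-1})u_\infty=0$, whence $\lambda_1(\psi-r_\infty u_\infty^{p-1})=0$; strict monotonicity of $\lambda_1$ and $\psi-r_\infty u_\infty^{p-1}\le\psi$ then give $\lambda_1(\psi)\ge0$ (strict if $r_\infty>0$), contradicting $\lambda_1(\psi)<0$; so $r_\infty<0$. Finally, for $r_\infty<0$ the map $v\mapsto(-\psi+r_\infty v^{p-1})$ is strictly decreasing in $v>0$, so a Picone/Díaz–Saa argument shows the positive solution of $-c_n\Delta v+\psi v=r_\infty v^p$ on $M$ is unique; hence every subsequential limit of $(u(t))$ equals $u_\infty$, and by precompactness $u(t)\to u_\infty$, i.e.\ the flow converges to a metric of constant negative pseudo‑scalar curvature. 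The only nonroutine ingredient is the no‑concentration estimate of Step 3; everything else is the maximum principle (using $\psi<0$), standard parabolic regularity, and uniqueness for the negative limit equation.
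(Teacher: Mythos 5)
Your Steps 1, 2 and 4 are essentially the paper's argument (plus a uniqueness argument for the limit equation that the paper replaces by a citation to Ye), but Step 3 --- which you yourself flag as the main obstacle --- has a genuine gap, and it is precisely the step where the paper's proof does something you have not supplied. Your primary route (Struwe-type no-concentration as in Theorem \ref{main}) is not justified here: Theorem \ref{thm2} makes no smallness assumption such as $E(u_0)\leq Y(S^n)$, the Palais--Smale property $\int_M \breve{R}^2u^{p+1}\to 0$ is derived in Section \ref{sect3} under the sign condition $\psi\geq 0$ (the term $-\int_M\psi(\breve{R}u)^2$ has the wrong sign when $\psi<0$ and must be absorbed using the uniform lower bound on $u$), and the energy-splitting identity $r_\infty=(E(u_\infty)^{n/2}+mY(S^n)^{n/2})^{2/n}$ presupposes $r_\infty\geq 0$, so ``monotonicity of $r$ forbids bubbles'' is not a complete argument unless you already know $r_\infty<0$. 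Your parenthetical alternative --- the ODE barrier $\partial_t u_M^{p}\leq -\inf_M\psi\, u_M+r(t)u_M^{p}$ --- is the right and much cheaper route, but it only yields a uniform upper bound after one knows that $r_\psi(t)$ becomes (and stays) negative, and you assume this rather than prove it.

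The missing ingredient is a three-line contradiction argument: if $r_\psi(t)\geq 0$ for all $t$, then at the spatial minimum $u_m(t)=\inf_M u(t)$ the flow \eqref{yamabe} gives $\partial_t u_m^{p}\geq -\sup_M\psi\cdot u_m>0$; since $u_m$ is then nondecreasing and hence bounded below by $u_m(0)>0$, the right-hand side is bounded below by a positive constant, so $u_m(t)\to\infty$, contradicting the conserved normalization $\int_M u^{p+1}\,dv=1$. Hence there is $t_0$ with $r_\psi(t_0)<0$, monotonicity gives $r_\psi(t)\leq r_\psi(t_0)<0$ for $t\geq t_0$, and your barrier then yields $u_M^{p-1}(t)\leq\max\{u_M^{p-1}(t_0),\ \sup_M\psi/r_\psi(t_0)\}$. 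With that inserted, your Steps 2--4 close the proof; your uniqueness argument via the strict monotonicity of $v\mapsto -\psi+r_\infty v^{p-1}$ for $r_\infty<0$ is a legitimate way to upgrade subsequential to full convergence, and is in fact more self-contained than the paper's appeal to the exponential convergence argument of Ye.
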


We remark that the results may be extended to the case when $p>1$. In fact, assuming that $\lambda_1(\psi)=0$ and $\psi(x)=0$ on $M$,  we have the following result.

\begin{Thm}\label{thm3}
  Assume that $\lambda_1(\psi)= 0$ and $\psi(x)=0$ on $M$. Fix any $p>1$. The Yamabe-type flow $(u(t))$  satisfying
  \begin{equation}\label{yamabe5}
\frac{u_t}{u}=u^{-p}c_n\Delta u+r(t), \ in \ M\times (0, T), \ T>0
\end{equation}
 with
 $$
 r(t)=\int_Mc_n|\nabla u|^2dv/ \int_Mu^{p+1}
 $$
  exists globally and
  converges to a positive constant at $t=\infty$.
\end{Thm}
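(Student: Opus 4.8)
\emph{Strategy.} I would follow the three-step pattern of Theorems \ref{thm1}--\ref{thm2}, specialised to $\psi\equiv 0$ (note that $\psi\equiv 0$ already forces $\lambda_1(\psi)=0$, the constants being the first eigenfunctions): first a priori bounds on $u$ giving global existence, then the monotonicity of $r$ to pin down $r_\infty$, and finally compactness of the flow at $t=\infty$ together with identification of the limit. Since $\psi\equiv 0$ and $r(t)=c_n\int_M|\nabla u|^2\,dv\ge 0$, the maximum principle applied to \eqref{yamabe5} gives, at a spatial minimum of $u(\cdot,t)$, that $u_t\ge r(t)u\ge 0$, whence $\min_M u(\cdot,t)\ge m_0:=\min_M u_0>0$ for all $t$; at a spatial maximum $u_t\le r(t)u\le r(0)u$, whence $\max_M u(\cdot,t)\le (\max_M u_0)\,e^{r(0)t}$. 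On every $[0,T]$ the solution therefore takes values in a compact subinterval of $(0,\infty)$, so $u_t-c_nu^{1-p}\Delta u=r(t)u$ is uniformly parabolic with bounded coefficients there; parabolic Schauder estimates bound $u$ in $C^k(M\times[\tau,T])$ for every $\tau\in(0,T)$ (and up to $t=0$, $u_0$ being smooth), which excludes finite-time blow-up and upgrades the local solution of \cite{M} to a global one.

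\emph{The limit $r_\infty$.} From the computations in the Introduction, $\int_M u^{p+1}\,dv\equiv 1$, $r$ is non-increasing, and $\tfrac12 r'(t)=-\int_M u^{p-1}u_t^2\,dv$; hence $r(t)\searrow r_\infty\ge 0$ and $\int_0^\infty\!\int_M u^{p-1}u_t^2\,dv\,dt<\infty$. Writing $R_\psi=-c_nu^{-p}\Delta u$ as in \eqref{scalar}, equation \eqref{yamabe5} reads $u_t=u(r-R_\psi)$, so $u^{p-1}u_t^2=u^{p+1}(R_\psi-r)^2$, and one may pick $t_j\to\infty$ with $\int_M u(t_j)^{p+1}(R_\psi(t_j)-r(t_j))^2\,dv\to 0$. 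Integrating $-c_n\Delta u=u^pR_\psi$ over $M$ gives $\int_M u^pR_\psi\,dv=0$, so $r(t_j)\int_M u(t_j)^p\,dv=\int_M u(t_j)^p(r(t_j)-R_\psi(t_j))\,dv$, whose absolute value is at most $\big(\int_M u^{p+1}(R_\psi-r)^2\,dv\big)^{1/2}\big(\int_M u^{p-1}\,dv\big)^{1/2}$ by Cauchy--Schwarz. Since $\int_M u^{p-1}\,dv\le(\mathrm{Vol}(M))^{2/(p+1)}$ (Hölder, using $\int_M u^{p+1}=1$) and $\int_M u^p\,dv\ge m_0^{\,p}\,\mathrm{Vol}(M)>0$, we conclude $r(t_j)\to0$, hence $r_\infty=0$; in particular $\int_M|\nabla u(\cdot,t)|^2\,dv\to0$ as $t\to\infty$.

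\emph{Convergence.} The one genuinely delicate point is a uniform-in-time bound $\sup_M u(\cdot,t)\le C_\infty$. I would prove it by a parabolic Moser/De Giorgi iteration: testing \eqref{yamabe5} against $u^q$ yields, for $q>p-1$,
$$
\frac1{q+1}\frac{d}{dt}\int_M u^{q+1}\,dv=-\frac{4c_n(q+1-p)}{(q-p+2)^2}\int_M\big|\nabla u^{(q-p+2)/2}\big|^2\,dv+r(t)\int_M u^{q+1}\,dv,
$$
in which the dissipative term controls, via the Sobolev inequality, the $L^{2n/(n-2)}$-norm of $u^{(q-p+2)/2}$; iterating over time windows $[t-2,t]$ with the preserved datum $\int_{t-2}^{t}\!\int_M u^{p+1}\,dv\,ds=2$ as the seed, and absorbing the reaction term by means of $r(t)\to 0$, produces $\sup_{M\times[t-1,t]}u\le C_\infty$ for all $t\ge2$, independently of $t$. (Handling the degeneracy of the operator $u^{1-p}\Delta$ where $u$ is large is the main obstacle; everything else is routine.) Once $m_0\le u\le C_\infty$ uniformly, parabolic Schauder estimates give uniform bounds in $C^k(M)$ on each time slice $t\ge1$, so $\{u(\cdot,t)\}_{t\ge1}$ is precompact in $C^2(M)$. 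If $u(\cdot,t_j)\to u_\infty$ in $C^2$ for some $t_j\to\infty$, then $c_n\int_M|\nabla u_\infty|^2\,dv=\lim r(t_j)=0$, so $u_\infty$ is a constant, and $\int_M u_\infty^{p+1}\,dv=1$ forces $u_\infty\equiv(\mathrm{Vol}(M))^{-1/(p+1)}$. As every subsequential $C^2$-limit of the flow is this same constant, $u(\cdot,t)\to(\mathrm{Vol}(M))^{-1/(p+1)}$ as $t\to\infty$, a positive constant (of constant, indeed vanishing, pseudo-scalar curvature), as claimed.
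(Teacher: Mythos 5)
Your global existence step and your identification of $r_\infty=0$ are both correct (the latter by a different route than the paper: you integrate $-c_n\Delta u=u^pR_\psi$ over $M$ and use Cauchy--Schwarz against the dissipation $\int u^{p+1}(R_\psi-r)^2$, whereas the paper simply notes that $u_m^p(t)\geq u_m^p(0)\exp\bigl(\int_0^t r\bigr)$ together with $u_m\leq \mathrm{Vol}(M)^{-1/(p+1)}$ forces $\int_0^\infty r<\infty$, hence $r\to0$ by monotonicity). But there is a genuine gap at exactly the point you flag as ``the one genuinely delicate point'': the uniform-in-time bound $\sup_Mu(\cdot,t)\leq C_\infty$. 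You propose a parabolic Moser iteration and leave its main difficulty --- the degeneracy of $u^{1-p}\Delta$ where $u$ is large --- unresolved. This is not a cosmetic omission: the equation is of fast-diffusion type with effective exponent $m=1/p<1$, and in this regime the $L^{p+1}\to L^\infty$ smoothing you are invoking is precisely what can fail; the iteration does not obviously close, and you give no argument that it does. Everything after this point (precompactness in $C^2$, identification of the limit as the constant $\mathrm{Vol}(M)^{-1/(p+1)}$) depends on this bound, so as written the convergence part is incomplete.

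The fix is elementary and you already have both halves of it in hand. Your two barrier inequalities give
$$
u_M^p(t)\leq u_M^p(0)\exp\Bigl(\int_0^t r(s)\,ds\Bigr),\qquad u_m^p(t)\geq u_m^p(0)\exp\Bigl(\int_0^t r(s)\,ds\Bigr),
$$
with the \emph{same} Gronwall factor on both sides; dividing yields the time-independent Harnack bound $u_M(t)/u_m(t)\leq u_M(0)/u_m(0)$. Combined with the preserved constraint $\int_Mu^{p+1}\,dv=1$, which forces $u_m(t)\leq \mathrm{Vol}(M)^{-1/(p+1)}\leq u_M(t)$, this gives uniform two-sided bounds on $u$ for all time, with no iteration needed. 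This is the paper's argument, and it also replaces your crude bound $\max_Mu\leq(\max_Mu_0)e^{r(0)t}$ in the existence step. One further remark: the paper goes on to extract an exponential rate (via the identity obtained by testing with $u^{1-p}\Delta u$ and the spectral gap $\int|\Delta u|^2\geq c\int|\nabla u|^2$), whereas your compactness argument yields convergence without a rate; since the theorem only asserts convergence, that part of your argument is acceptable once the uniform bound is repaired.
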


Via a use of bubble analysis, we can handle more complicated case as in \cite{B05}, since the proof is lengthy, we prefer to present it elsewhere.

The plan of this note is below. The proof of Theorem \ref{thm1} will be given in section \ref{sect2} below. Using Struwe's compactness result, we prove Theorem \ref{main} in section \ref{sect3}. The proofs of Theorem \ref{thm2} and  Theorem \ref{thm3} will be given in section \ref{sect4}.

\section{global existence  of Yamabe-type flows }\label{sect2}

We treat the difficulty case when $\psi(x)>0$ and $r(0)>0$ and the other case can be handled in the proof of Theorem \ref{thm2} below.
We first establish the following result. Note that $Y_\psi(M)>0$ when $r>0$.

In short we denote by $R=R_\psi$. Recall $$r-R=\frac{u_t}{u}.$$
By \eqref{scalar} we know that
$$
-R=u^{-p}(c_n\Delta u-\psi(x)u).
$$
Taking the time derivative on both sides, we have
\begin{align*}
\frac{d}{dt}(-R)&=-pu^{-p-1}u_t(c_n\Delta u-\psi(x)u)+ u^{-p}(c_n\Delta u_t-\psi(x)u_t)  \\
&=p\frac{u_t}{u}R+u^{-p}(c_n\Delta [(r-R)u]-\psi(x)[(r-R)u])  \\
&=p(r-R)R+u^{-p}(c_n\Delta [(r-R)u]-\psi(x)[(r-R)u]),
\end{align*}
which can be written as
$$
\frac{\partial}{\partial t}R=u^{-p}(c_n\Delta [(R-r)u]-\psi(x)[(R-r)u])+p(R-r)R.
$$
Define
$$
L^uv=u^{-p}(c_n\Delta [vu]-\psi(x)[vu]),
$$
and then we have
$$
\frac{\partial}{\partial t}R=-L^u(r-R)+p(R-r)R.
$$

Then we may use the maximum principle to obtain
\begin{Pro} \label{lm1}
We have the pseudo-scalar curvature lower bound
\begin{equation}
\inf _{M} R_\psi(t) \geq \min \left\{\inf _{M} R_\psi(0), 0\right\}
\end{equation}
along the Yamabe-type flow.
\end{Pro}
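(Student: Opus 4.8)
The plan is to turn the evolution identity for $R=R_\psi$ obtained just above into a genuine parabolic inequality for $R$ and then run the minimum principle on $R_{\min}(t):=\min_{x\in M}R(x,t)$. First I would expand $L^u$ explicitly: from $\Delta(vu)=u\Delta v+2\nabla u\cdot\nabla v+v\Delta u$ together with $u^{-p}(c_n\Delta u-\psi(x)u)=-R$ (this is \eqref{scalar}), one gets
$$
L^uv=c_nu^{1-p}\Delta v+2c_nu^{-p}\nabla u\cdot\nabla v-Rv .
$$
Substituting $v=R-r$ into $\partial_tR=L^u(R-r)+p(R-r)R$ and using that $r=r(t)$ is constant in $x$, the equation for $R$ becomes
$$
\partial_tR=c_nu^{1-p}\Delta R+2c_nu^{-p}\nabla u\cdot\nabla R+(p-1)R(R-r).
$$
On every $[0,\tau]\subset[0,T)$ the solution $u$ is smooth and positive, so the leading coefficient $c_nu^{1-p}$ is bounded below by a positive constant, $\nabla u$ is bounded, and the reaction term is locally Lipschitz; this is exactly the setting in which the maximum principle applies.

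Next I would record the sign of the reaction term. In the case under consideration ($\psi>0$ on $M$ and $r(0)>0$) we have $r(t)=\int_M(c_n|\nabla u|^2+\psi(x)u^2)\,dv>0$ for all $t$, since $u(t)>0$. Hence at any point and time where $R<0$ one has $R-r<R<0$, so $(p-1)R(R-r)>0$ because $p-1=\tfrac{4}{n-2}>0$. Then I would apply Hamilton's trick to $R_{\min}(t)$, which is locally Lipschitz on $[0,T)$: for a.e.\ $t$ there is a spatial minimum point $x_t$ with $\nabla R(x_t,t)=0$ and $\Delta R(x_t,t)\ge0$, and evaluating the equation at $(x_t,t)$ gives
$$
\frac{d}{dt}R_{\min}(t)\ge (p-1)R_{\min}(t)\bigl(R_{\min}(t)-r(t)\bigr)\quad\text{for a.e. }t .
$$

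Finally a short barrier argument finishes it. Put $m:=\min\{R_{\min}(0),0\}\le0$. If $R_{\min}(t_0)<m$ for some $t_0\in(0,T)$, set $t_1:=\sup\{t<t_0:\ R_{\min}(t)\ge m\}$; then $R_{\min}(t_1)=m\le 0$ and $R_{\min}(t)<m\le0$ on $(t_1,t_0]$. On that interval $R_{\min}<0$, so by the previous two steps $\frac{d}{dt}R_{\min}(t)\ge (p-1)R_{\min}(t)(R_{\min}(t)-r(t))\ge0$ a.e., hence (using absolute continuity of $R_{\min}$) $R_{\min}(t_0)\ge R_{\min}(t_1)=m$, contradicting $R_{\min}(t_0)<m$. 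Therefore $R_{\min}(t)\ge m$ for all $t$, which is the asserted lower bound.

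The main obstacle is not the algebra but the justification of the minimum-principle step: one needs the flow to be a smooth (or $C^{2,1}$) solution on $[0,T)$ so that $R(\cdot,t)$ is $C^2$ and $R_{\min}(t)$ is Lipschitz, and one must invoke Hamilton's trick (or an equivalent Dini-derivative estimate) rather than differentiating $R_{\min}$ naively. A secondary point is that the argument genuinely uses $r(t)\ge0$, which is why the standing hypothesis $\psi>0$ (equivalently $r>0$ along the flow) is in force here; the complementary situation, where $r$ need not be nonnegative, is precisely the one deferred to the proof of Theorem \ref{thm2}.
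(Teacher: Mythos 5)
Your proposal is correct and follows essentially the same route as the paper: derive the parabolic equation $\partial_tR=c_nu^{1-p}\Delta R+2c_nu^{-p}\nabla u\cdot\nabla R+(p-1)R(R-r)$ for $R=R_\psi$ and apply the minimum principle, using $r\geq 0$ (from $\psi>0$) to see that the reaction term is nonnegative wherever $R<0$. Your closing barrier argument simply replaces the paper's integrating factor $\exp\bigl((p-1)\int_0^t(r-R)\bigr)$, and your explicit use of Hamilton's trick makes rigorous the step the paper leaves implicit in the phrase ``at the minimum point of $R(t)$''.
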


\begin{proof} Note that at the minimum point of $R(t)$, we have
Recall that
$$
\frac{\partial}{\partial t} R\geq u^{-p}((R-r)c_n\Delta u-\psi(x)[(R-r)u])+p(R-r)R.
$$
Recall that
$$
R= u^{-p}(-\Delta u+\psi(x)u).
$$
Then we have
$$
\frac{\partial}{\partial t} R\geq (R-r)(-R)+p(R-r)R=(p-1)R(R-r).
$$
We may write it as
$$
\frac{\partial}{\partial t}(R\exp((p-1)\int_0^t (r-R)))\geq 0,
$$
which implies that
$$
R\exp((p-1)\int_0^t (r-R))\geq \inf_M R(0).
$$

Then we have the conclusion.
\end{proof}

Similarly, at the maximum point of $R$, we have
$$
\frac{\partial}{\partial t} R\leq (R-r)(-R)+p(R-r)R=(p-1)R(R-r),
$$
which implies that
$$
R\exp((p-1)\int_0^t (r-R))\leq \sup_M R(0),
$$
which is useful in the case when $\sup_MR(0)\leq 0$.

To get better estimate, we now choose $\sigma\geq 1$ such that
\begin{equation}
\sigma=\max \left\{\sup _{M}\left(1-R(0)\right), 1\right\}.
\end{equation}
Then applying the maximum principle again, we have
\begin{equation}
R_\psi(t)+\sigma \geq 1 \text { for all } t \geq 0
\end{equation}

At any finite time interval, we have Harnack inequality for the flow in the sense below.
\begin{Lem}\label{lm4}
\begin{equation}
\sup_{M} u(t) \leq C(T)
\end{equation}
and
\begin{equation}
 \inf_{M} u(t) \geq c(T)>0.
\end{equation}
\end{Lem}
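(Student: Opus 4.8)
The plan is to derive uniform-in-$M$ upper and lower bounds for $u(t)$ on a fixed time interval $[0,T]$ from the energy bound $r(t)\le r(0)$, the $L^{p+1}$ normalization $\int_M u^{p+1}=1$, and the pseudo-scalar curvature control established above. The key observation is that the flow equation \eqref{yamabe2} reads $u_t/u=-R_\psi+r(t)$, so that
\[
u(x,t)=u(x,0)\exp\Big(\int_0^t(r(s)-R_\psi(x,s))\,ds\Big).
\]
Thus controlling $\sup_M u$ and $\inf_M u$ reduces to controlling the time integral $\int_0^t(r(s)-R_\psi(x,s))\,ds$ from above and below, uniformly in $x$ and for $t\le T$.

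For the lower bound on $u$, I would use Proposition \ref{lm1}: $R_\psi(t)\ge\min\{\inf_M R_\psi(0),0\}=:-A$ with $A\ge 0$, hence $r(s)-R_\psi\ge r(s)-\sup_M R_\psi(s)$; more usefully, combine $r(s)\ge$ (its limit, or at least a lower bound coming from $Y_\psi(M)>0$ and $\int u^{p+1}=1$, so $r(s)\ge Y_\psi(M)>0$) with an \emph{upper} bound on $R_\psi(s)$. The upper bound on $R_\psi$ is the delicate point: from the shifted quantity $R_\psi(t)+\sigma\ge 1$ we have a lower bound, and one gets an upper bound on $\sup_M R_\psi$ by a parabolic maximum-principle argument applied to $w=R_\psi+\sigma>0$, using its evolution $\partial_t R_\psi=-L^u(r-R_\psi)+p(R_\psi-r)R_\psi$ — the $pR_\psi^2$ term is the dangerous one, but since $\int_M R_\psi u^{p+1}=r$ is bounded and $\int_M u^{p+1}=1$, an integral estimate plus interpolation (Moser iteration on the linear-in-leading-order operator $L^u$, whose coefficients are controlled once we know $u$ is bounded above and below — here a bootstrap/continuity argument in $t$ is needed) yields $\sup_M R_\psi(t)\le C(T)$. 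With $|R_\psi(s)|\le C(T)$ and $|r(s)|\le C$ for $s\le T$, the exponential representation gives both $\sup_M u(t)\le u_{\max}(0)e^{C(T)\,t}\le C(T)$ and $\inf_M u(t)\ge u_{\min}(0)e^{-C(T)\,t}\ge c(T)>0$.

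Concretely, the steps in order are: (i) record $r(s)\le r(0)$ and, from $Y_\psi(M)>0$ together with $\int u^{p+1}=1$, the bound $0<Y_\psi(M)\le r(s)\le r(0)$ for all $s$; (ii) from Proposition \ref{lm1} get $R_\psi(s)\ge -A$ with $A:=\max\{-\inf_M R_\psi(0),0\}$; (iii) set up the continuity/bootstrap: on a maximal subinterval where $C^{-1}\le u\le C$, the operator $L^u$ has uniformly controlled coefficients, so a De Giorgi–Nash–Moser or parabolic maximum-principle estimate on $w=R_\psi+\sigma$, using the integral identity $\int_M w\,u^{p+1}=r+\sigma$ as an $L^1$-type bound and the evolution equation, produces $\sup_M R_\psi(s)\le C(T)$; (iv) feed $|R_\psi|\le C(T)$, $|r|\le C$ into the exponential representation of $u$ to close the bootstrap and obtain the stated bounds with explicit $C(T)$, $c(T)$ depending only on $T$, $n$, $\psi$, $(M,g)$ and the initial data.

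The main obstacle is step (iii): obtaining a \emph{time-dependent} but finite upper bound for $\sup_M R_\psi$ without already knowing the Harnack bounds for $u$ (to avoid circularity), i.e.\ running the maximum-principle/Moser estimate and the continuity argument simultaneously. The quadratic term $p(R_\psi-r)R_\psi$ in the evolution of $R_\psi$ means a naive maximum-principle comparison would only give a bound blowing up in finite time; one must instead use the conservation $\int_M u^{p+1}=1$ and $\int_M R_\psi u^{p+1}=r$ bounded to control the nonlinearity in an averaged sense, then upgrade to an $L^\infty$ bound. This is exactly the type of argument carried out in \cite{Y} and \cite{B05} for the Yamabe flow, and the adaptation here is where the real work lies; the passage from $L^\infty$ control of $R_\psi$ to the Harnack inequality for $u$ is then immediate from the representation formula.
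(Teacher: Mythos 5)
Your upper bound for $\sup_M u(t)$ is fine and is essentially the paper's: from $u_t/u=r(t)-R_\psi$, the monotonicity $r(t)\le r(0)$ and the pointwise lower bound $R_\psi+\sigma\ge 1$ of Proposition \ref{lm1} give $u_t/u\le r(0)+\sigma$, hence $\sup_M u(t)\le C(T)$ by integration. The problem is your lower bound. You route it through a uniform \emph{upper} bound $\sup_M R_\psi(t)\le C(T)$, to be obtained in your step (iii) by a parabolic Moser iteration on the evolution equation $\partial_t R_\psi=-L^u(r-R_\psi)+p(R_\psi-r)R_\psi$ inside a continuity/bootstrap argument. This is a genuine gap, not just an omitted routine step: the coefficients of $L^u$ are controlled only once $u$ is bounded above \emph{and below}, which is exactly what you are trying to prove, and the quadratic term $p R_\psi^2$ means any pointwise comparison threatens finite-time blow-up of the bound; the averaged quantities you invoke ($\int_M u^{p+1}=1$, $\int_M R_\psi u^{p+1}=r$ bounded) are sub-critical information for an $L^\infty$ conclusion and you do not explain how to close the iteration. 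Moreover, in the references you cite the logical order is the opposite of yours: the Harnack bounds for the conformal factor are established first, and curvature upper bounds come afterwards, so "this is exactly the argument of \cite{Y}, \cite{B05}" does not fill the hole.

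The paper avoids the curvature upper bound entirely. Having $\sup_{[0,T]\times M}u\le C(T)$, set $P=\psi(x)+\sigma\bigl(\sup_{[0,T]\times M}u\bigr)^{p-1}$, which is a bounded function. Then, at each fixed time,
\begin{equation*}
-c_n\Delta u+Pu\;\ge\;-c_n\Delta u+\psi u+\sigma u^{p}\;=\;(R_\psi+\sigma)\,u^{p}\;\ge\;0,
\end{equation*}
using only the \emph{lower} bound $R_\psi+\sigma\ge 1$. So $u(\cdot,t)$ is a positive supersolution of a fixed linear elliptic operator with bounded zeroth-order coefficient, and the elliptic weak Harnack inequality (Moser iteration, or Cor.~A.5 in \cite{B05}) yields $\int_M u(t)\,dv\le C(T)\inf_M u(t)$. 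Combined with $1=\int_M u^{p+1}\le(\sup_M u)^{p}\int_M u$ and the already-established bound $\sup_M u\le C(T)$, this gives $\inf_M u(t)\ge c(T)>0$. You should replace your step (iii)--(iv) by this elliptic, fixed-time argument; as written, your proposal does not constitute a proof of the lower bound.
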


\begin{proof} Note that

\begin{equation}
u^{-1}\frac{\partial}{\partial t} u(t)=-\left(R_\psi(t)-r(t)\right) \leq \left(r(0)+\sigma\right)
\end{equation}
 Thus,
\begin{equation}
\sup _{M} u(t) \leq C(T)
\end{equation}
for $t\in [0,T]$. Hence, for
\begin{equation}
P:=\psi(x)+\sigma\left(\sup _{0 \leq t \leq T \atop M} u(t)\right)^{p-1},
\end{equation}
we have
\begin{equation}
\begin{aligned} &-c_n\Delta u(t)+P u(t) \\ & \geq- c_n\Delta u(t)+\psi(x) u(t)+\sigma u(t)^{\frac{n+2}{n-2}} \\ &=\left(R_\psi(t)+\sigma\right) u(t)^{p} \geq 0 \end{aligned}
\end{equation}
for all $t\in [0,T]$.
Then using a Moser iteration argument (or by Cor. A.5 in \cite{B05})  we have
$$
\int_M u(t)\leq C(T)\inf_Mu(t).
$$
Then by the volume constrain condition, we have
\begin{equation}
1\leq C(T)\inf _{M} u(t)\left(\sup _{M} u(t)\right)^{p} .
\end{equation}
for all $t\in [0,T]$. Since $\sup_M u(t)\leq C(T)$, we get the conclusion.

\end{proof}

By now it is standard to set the global existence of the flow.
Using the result from \cite{T2} (see also \cite{L}), we have the result below.
\begin{Pro} \label{lm6}
For $T>0$, there exist $0<\alpha<1$ and a constant $C(T)$ such that
\begin{equation}
\left|u\left(x_{1}, t_{1}\right)-u\left(x_{2}, t_{2}\right)\right| \leq C(T)\left(\left(t_{1}-t_{2}\right)^{\frac{\alpha}{2}}+d\left(x_{1}, x_{2}\right)^{\alpha}\right)
\end{equation}
for all $x_1,x_2\in M$ and any $t_1,t_2\in [0,T]$ with $0<t_2-t_1<1$.
\end{Pro}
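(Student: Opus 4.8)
The plan is to observe that on the slab $M\times[0,T]$ the flow equation \eqref{yamabe}, after dividing by the factor $u^{p-1}$ (which is now bounded), becomes a uniformly parabolic equation whose coefficients, once evaluated along the solution, are bounded measurable functions of $(x,t)$; the desired Hölder bound is then the linear parabolic De Giorgi--Nash--Moser estimate, which is precisely the content of \cite{T2} (see also \cite{L}) and runs parallel to Cor.~A.5 of \cite{B05} already invoked in the proof of Lemma~\ref{lm4}. Concretely, \eqref{yamabe} reads
$$
u_t = c_n u^{1-p}\Delta u + \bigl(r(t)-\psi(x)u^{1-p}\bigr)u =: a(x,t)\,\Delta u + b(x,t)\,u,
$$
with $a(x,t)=c_n u(x,t)^{1-p}$ and $b(x,t)=r(t)-\psi(x)u(x,t)^{1-p}$.

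The key input is the two-sided Harnack bound of Lemma~\ref{lm4}: there are constants $0<c(T)\le C(T)$ with $c(T)\le u\le C(T)$ on $M\times[0,T]$. Since $p>1$ the map $s\mapsto s^{1-p}$ is decreasing, so $c_nC(T)^{1-p}\le a(x,t)\le c_n c(T)^{1-p}$, and the equation is uniformly parabolic on the slab with ellipticity ratio depending only on $T$. Moreover $\psi$ is smooth on the compact manifold $M$, hence bounded; $r(t)$ is non-increasing along the flow (shown in the Introduction), so $r(t)\le r(0)$, while the volume normalization $\int_M u^{p+1}\,dv=1$ gives $r(t)=E(u(t))\ge Y_\psi(M)>-\infty$ by the Sobolev embedding. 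Hence $r$ is bounded on $[0,\infty)$, and combining this with Lemma~\ref{lm4} we obtain $b\in L^\infty(M\times[0,T])$ with norm controlled by $T$ and the data.

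Applying the parabolic De Giorgi--Nash--Moser theory to $u_t=a\Delta u+bu$ on the closed manifold $M$ (uniformly elliptic bounded measurable $a$, bounded $b$) then yields a Hölder modulus of continuity in the parabolic metric: there are $\alpha\in(0,1)$ and $C(T)$, depending only on $n$, the geometry of $(M,g)$, the ellipticity ratio $C(T)/c(T)$, $\|b\|_{L^\infty(M\times[0,T])}$, the modulus of continuity of the smooth initial datum $u_0$, and $T$, for which the asserted inequality holds. For interior times this is the classical estimate; to reach the initial time $t_1=0$ one uses that $u_0$ is smooth, hence Lipschitz, so the bound extends continuously down to $t=0$. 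Restricting to $0<t_2-t_1<1$ is harmless, the general case following by a finite chain in the time variable.

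I do not expect a genuine obstacle here: the entire substance has already been packed into Lemma~\ref{lm4}, whose two-sided bound is exactly what degenerates the quasilinear operator $u^{1-p}(c_n\Delta-\psi)$ into a uniformly parabolic one with bounded coefficients. The only points calling for slight care are (i) verifying that $r(t)$ is bounded \emph{below}, not merely above --- handled via $r(t)=E(u(t))\ge Y_\psi(M)$ using the norm constraint --- and (ii) propagating the Hölder bound all the way to $t=0$, which rests only on the regularity of the initial metric $u_0$. Everything else is a direct appeal to \cite{T2} (or \cite{L}, or Appendix~A of \cite{B05}).
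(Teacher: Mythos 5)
Your proposal is correct in substance and rests on exactly the same pillar as the paper's proof: Lemma~\ref{lm4} supplies two-sided bounds $0<c(T)\le u\le C(T)$ on the slab, which turns the degenerate quasilinear operator into a uniformly parabolic one with bounded coefficients, after which the H\"older bound is standard regularity theory. The one place where your route genuinely diverges is the form in which you present the equation before invoking the regularity theorem. You keep the non-divergence form $u_t=a(x,t)\Delta u+b(x,t)u$ with $a=c_nu^{1-p}$ merely bounded and measurable, and you call the resulting estimate ``De Giorgi--Nash--Moser.'' Strictly, De Giorgi--Nash--Moser (and in particular Trudinger's paper \cite{T2}, which is the reference you and the author both lean on) applies to \emph{divergence-form} equations; for non-divergence form with only $L^\infty$ coefficients the correct tool is Krylov--Safonov. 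The paper sidesteps this by substituting $v=u^p$, which yields the divergence-form equation $\partial_t v=c_n\,\mathrm{div}(v^{1/p-1}\nabla v)-p\psi v^{1/p-1}v+pr(t)v$ whose structure conditions match (2.1) and (1.2--1.3) of \cite{T2} once $v^{1/p-1}$ is pinched between positive constants by Lemma~\ref{lm4}; note that writing the equation in divergence form directly in $u$ would produce an uncontrolled $|\nabla u|^2$ term, so the substitution is not cosmetic. Your argument closes just as well if you either perform this substitution or cite Krylov--Safonov instead, so this is a citation-level mismatch rather than a gap. Your extra care about bounding $r(t)$ from below via $r(t)=E(u(t))\ge Y_\psi(M)$ is harmless but not needed on a finite interval $[0,T]$, and your treatment of the initial time via the regularity of $u_0$ matches the paper's appeal to the up-to-$t=0$ version of Theorem~4.2 in \cite{T2}.
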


\begin{proof} Set $v=u^p$. The Yamabe type flow equation (\ref{yamabe}) on a compact sub-domain $D\subset M$ can be written as the divergence form that
$$
\partial_t v=c_ndiv (v^{1/p-1}\nabla v)- p\psi(x) v^{1/p-1} v +pr(t)v.
$$
One can see that the structure conditions (2.1) and (1.2-1.3) in \cite{T2} are satisfied.
Then we can invoke Theorem 4.2 in \cite{T2} to get the locally uniformly Holder estimate in $D$ for solutions $u$ up to the initial time  $t=0$. Namely, for any $x_0\in M$, $r>0$, and $T>0$, there exists uniform positive constants
$$\beta=\beta(B_{r}(x_0),sup_{B_{r}(x_0)}u(0))\in (0,1),$$ and $C(B_{r}(x_0),\beta,u(0), T)$ such that for any $j\geq 1$ with $B_{r}(x_0)\subset  D$, there holds
$$
|u|_{C^{\beta; \beta/2}(B_{r/2}(x_0)\times [0,T_0])}\leq C(B_{r}(x_0),sup_{B_{r}(x_0)}u(0), T).
$$
As always, we have used the Holder spaces $C^{\alpha; \alpha/2}(B_{r/2}(x_0)\times [0,T])$ in parabolic distance defined by $g+dt^2$.
Using the covering argument we can extend the estimate above to whole parabolic region $M\times [0,T]$.
\end{proof}

We now prove Theorem \ref{thm1}, which is the global existence result of the flow for any initial data.
\begin{proof} Fix any $T>0$. With the understanding of Proposition \ref{lm6},
we may use the standard regularity theory for parabolic equations (see \cite{F}, Theorem 5 on p. 64 or the book \cite{L}) to conclude that all higher order derivatives of the solution $u$ are uniformly bounded on every fixed time interval $[0,T]$. Hence,
we can extend the flow beyond $T$ and then the flow exists for all time.
This then completes the proof of Theorem \ref{thm1}.

\end{proof}

\section{P.S.sequence of Yamabe-type flows and proof of Theorem \ref{main}}\label{sect3}

To understand the asymptotic behavior of the Yamabe type flow \eqref{yamabe}, we need some integral estimates about the scalar curvature $R$. Then we consider the P.S. Sequence of the functional $E(u)$. Note that once we have the P.S.sequence along the flow, we may invoke the by now standard argument, that is, Struwe's compactness result \cite{St} to get the partial compactness of the sequence.

Let $\breve{R}=R-r$. Then
$$
\frac12 \frac{d}{dt} \breve{R}^2=\breve{R}\breve{R}_t=\breve{R}L^u(\breve{R})+pR\breve{R}^2-\breve{R}r_t.
$$
Recall that for $g(t)=u^{\frac{4}{n-2}}g$ for some fixed background metric $g$ and $p=\frac{n+2}{n-2}$, we have
$$
R_\psi(t)=u^{-p}(-c_n\Delta_{g_0} u+\psi(x)u).
$$

Let $g(t)=u(t)^{\frac{4}{n-2}}g$. The Yamabe-type flow \eqref{yamabe} may be written as
\begin{equation}\label{yamabe2}
\partial_t u^p=c_n\Delta_{g_0} u-\psi(x)u+r_\psi(t) u^p=-R_\psi(t)u^{p}+r_\psi(t) u^p.
\end{equation}
  We may denote by $\Delta=\Delta_{g_0}$ for simplicity.
Then
$$
\partial_t u^{p+1}=-(p+1)\breve{R}u^{p+1}.
$$
We now compute
$$
\partial_t [\breve{R}^2 u^{p+1}]=[\breve{R}L^u(\breve{R})+pR\breve{R}^2-\breve{R}r_t]u^{p+1}-(p+1)\breve{R}^3u^{p+1}.
$$
Using $\int \breve{R}u^{p+1}=1$, we have
$$
\partial_t \int_M[ \breve{R}^2 u^{p+1}]=\int_M[c_n\breve{R}u\Delta(\breve{R}u)-\psi (\breve{R}u)^2]-\breve{R}^3]u^{p+1}+pr\breve{R}^2u^{p+1},
$$
which is
$$
=-\int_M[c_n|\nabla (\breve{R}u)|^2+\psi (\breve{R}u)^2]-\int_M\breve{R}^3u^{p+1}+pr\int_M\breve{R}^2u^{p+1}.
$$
Since $\psi\geq 0$,
$$
\partial_t \int_M[ \breve{R}^2 u^{p+1}]\leq -\int_M\breve{R}^3u^{p+1}+pr\int_M\breve{R}^2u^{p+1}.
$$
Let
$$
f(t)=\int_M[ \breve{R}^2 u^{p+1}]:=\int_M[ \breve{R}^2 u^{p+1}]dv_{g_0}.
$$
By
$$
f(t)=\int_M[ \breve{R}^2 u^{p+1}]\leq (\int_M[ \breve{R}^3 u^{p+1}])^{2/3},
$$
we know that
$$
f'(t)\leq -f(t)^{3/2}+prf(t).
$$
Recall that
$$
\frac{d}{dt}r(t)=-\frac{n-2}{2}f(t).
$$
Then for any $t>0$,
$$
\int_t^\infty f(s)ds<\infty.
$$
Hence,
$$
\underline{\lim}_{t\to \infty} f(t)=0.
$$
By this and the differential inequality of $f$, we obtain that as $t\to\infty$,
$f(t)\to 0$.
Define
$$
r_\infty=\lim_{t\to \infty} r(t).
$$
Then we have
$$
\lim_{t\to \infty}\int_M|R-r_\infty|^2u^{p+1} dv_{g_0}=0.
$$

We now let $t_k\to \infty$, $u_k=u(t_k)$, and $g_k=g(t_k)=u_k^{4/(n-2)}g_0$. Then
\begin{equation}\label{ps1}
\ \ \int_M dv_{g_k}==\int_M u_k^{2n/(n-2)} dv_{g_0}=1,
\end{equation}
and
$$
\int_M |R_{g_k}-r_\infty|^{2n/(n+2)} dv_{g_k}\to 0,
$$
that is, as $t_k\to \infty$,
\begin{equation}\label{ps2}
\int_M|c_n\Delta u_k-\psi(x)u_k+r_\infty u_k^{\frac{n+2}{n-2}}|^{2n/(n+2)} dv_{g_0}\to 0.
\end{equation}
 We then apply Struwe's compactness result \cite{St} (see Theorem 3.1 in \cite{Hebey} for the detailed proof) to conclude the following result.

\begin{Pro}\label{struwe}  Let $u_k$ be as above with \eqref{ps1} and \eqref{ps2}. After passing to a subsequence, we may find a non-negative integer $m$, a non-negative smooth function $u_\infty$ and a sequence of $m-$tuplets $(x_{j,k}, \epsilon_{j,k})$ with the following properties

(i) The limiting function $u_\infty$ satisfies
$$
c_n\Delta u_\infty-\psi(x)u_\infty+r_\infty u_\infty^{\frac{n+2}{n-2}}=0.
$$

(ii) For $i\not=j$, we have, as $k\to\infty$,
$$
\frac{\epsilon_{j,k}}{\epsilon_{i,k}}+\frac{\epsilon_{i,k}}{\epsilon_{j,k}}+\frac{d(x_{i,k},x_{j,k})^2}{\epsilon_{i,k}\epsilon_{i,k}}\to\infty
$$

(iii) We have as $t_k\to\infty$,
$$
\| u_k-u_\infty- \sum_{i=1}^m w_{(x_{i,k},\epsilon_{i,k})} \|_{H^1(M)}\to 0,
$$
where
$$
w_{(x_{i,k},\epsilon_{i,k})}(z)=\eta_{x_{i,k}} (z)(\frac{4n(n-1)}{r_\infty})^{n-2/4}\epsilon_{i,k}^{n-2/2}(\epsilon_{i,k}^2+d(x_{i,k},z)^2)^{-\frac{n-2}{2}}
$$
with $\eta_{x_{i,k}}(z)$ is the cut-off function defined inside of the ball of the radius $2\delta$ smaller than the injectivity radius on $M$, namely,
$$
\eta_{x_{i,k}}(z)=\eta_\delta(exp_{x_{i,k}}^{-1}(z)), \ \ \eta_\delta\in C^2_0(B_0(2\delta)),
$$
where $B_0(2\delta)\subset \mathbb{R}^n$ the ball with center $0$ and with radius $2\delta>0$.
\end{Pro}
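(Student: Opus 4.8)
The plan is to recognise $(u_k)$ as a bounded Palais--Smale sequence for the energy functional attached to the limit equation and then to invoke Struwe's profile decomposition in the manifold form recorded as Theorem 3.1 in \cite{Hebey} (originally \cite{St}). Throughout we are in the case $r_\infty>0$, which is the one relevant to Theorem \ref{main} and the only one for which the bubble profile $w_{(x,\epsilon)}$ in the statement is defined.

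\emph{Step 1 (a priori bounds and the weak limit).} Write $f_k:=c_n\Delta u_k-\psi u_k+r_\infty u_k^{p}$, so that \eqref{ps2} reads $\|f_k\|_{L^{2n/(n+2)}(M,dv_{g_0})}\to0$. Pairing the identity defining $f_k$ with $u_k$ and using \eqref{ps1} and Holder's inequality,
$$\int_M\big(c_n|\nabla u_k|^2+\psi u_k^2\big)\,dv_{g_0}=r_\infty-\int_M f_k u_k\,dv_{g_0}=r_\infty+o(1),$$
since $\bigl|\int_M f_k u_k\,dv_{g_0}\bigr|\le\|f_k\|_{2n/(n+2)}\|u_k\|_{2n/(n-2)}=o(1)$ by \eqref{ps1}. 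As $\psi\ge0$ in this section and $\int_M u_k^2\,dv_{g_0}\le\mathrm{Vol}(M)^{(p-1)/(p+1)}$ by \eqref{ps1} and Holder, this gives $\sup_k\|u_k\|_{H^1(M)}<\infty$. Passing to a subsequence, $u_k\rightharpoonup u_\infty$ in $H^1(M)$, $u_k\to u_\infty$ in $L^2(M)$ and a.e.; hence $u_\infty\ge0$, and since $\|u_k^{p}\|_{L^{2n/(n+2)}}^{2n/(n+2)}=\int_M u_k^{p+1}=1$ together with a.e. convergence forces $u_k^{p}\rightharpoonup u_\infty^{p}$ in $L^{2n/(n+2)}(M)$, letting $k\to\infty$ in the weak form of \eqref{ps2} yields $c_n\Delta u_\infty-\psi u_\infty+r_\infty u_\infty^{p}=0$ weakly. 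By the Brezis--Kato bootstrap and elliptic regularity $u_\infty\in C^\infty(M)$, which is (i).

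\emph{Step 2 (Palais--Smale property and Struwe's decomposition).} Consider $J(u)=\tfrac12\int_M(c_n|\nabla u|^2+\psi u^2)\,dv_{g_0}-\tfrac{r_\infty}{p+1}\int_M|u|^{p+1}\,dv_{g_0}$ on $H^1(M)$. For $u_k\ge0$ one has $J'(u_k)[\phi]=-\int_M f_k\phi\,dv_{g_0}$, hence $\|J'(u_k)\|_{H^{-1}}\le C\|f_k\|_{2n/(n+2)}\to0$ by the Sobolev embedding $H^1(M)\hookrightarrow L^{2n/(n-2)}(M)$, while Step 1 gives $J(u_k)\to\tfrac1n r_\infty$ with $\sup_k\|u_k\|_{H^1}<\infty$. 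Thus $(u_k)$ is a bounded Palais--Smale sequence for $J$ at level $\tfrac1n r_\infty$ with $u_k\ge0$, and Struwe's compactness theorem (Theorem 3.1 in \cite{Hebey}) provides, after a further subsequence, the integer $m\ge0$, the limit $u_\infty$ of Step 1, and the $m$-tuples $(x_{j,k},\epsilon_{j,k})$ with the scale separation (ii) and the $H^1$-decomposition (iii). Each bubble arises by rescaling the remainder $v_k^{j}:=u_k-u_\infty-\sum_{i<j}w_{(x_{i,k},\epsilon_{i,k})}$ in geodesic normal coordinates at a point of maximal concentration and passing to a non-trivial weak limit in $D^{1,2}(\mathbb{R}^n)$ solving $c_n\Delta w+r_\infty w^{p}=0$; by the Caffarelli--Gidas--Spruck classification this limit is the standard bubble, which accounts for the explicit shape of $w_{(x_{i,k},\epsilon_{i,k})}$, and since $u_k\ge0$ the limits are $\ge0$, so only the positive bubbles occur. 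The cut-offs $\eta_{x_{i,k}}$ and the geometric terms coming from $g_0$ contribute only to the $H^1$-remainder in (iii).

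\emph{Main obstacle.} The delicate part is the inductive bubble-extraction inside Struwe's theorem: at each stage one must simultaneously locate the concentration point and scale, show that the rescaled remainder converges to a non-trivial entire solution of the limit equation (so that a fixed quantum of energy, bounded below in terms of $Y(S^n)$, is peeled off and the process terminates after finitely many steps), and establish the asymptotic orthogonality (ii) of successive bubbles, i.e. the Pythagoras/Brezis--Lieb splitting of the $H^1$- and $L^{p+1}$-norms in the presence of several bubbling scales. All of this is the content of the cited compactness theorem, so in the write-up it suffices to verify its hypotheses --- the $H^1$-bound and the Palais--Smale property established above --- and to note that on a manifold the only extra point is that the curvature and cut-off errors are of lower order, which is exactly why the Euclidean classification applies to the blow-up limits.
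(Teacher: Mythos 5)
Your proposal is correct and follows essentially the same route as the paper, which simply cites Struwe's compactness theorem (Theorem 3.1 in \cite{Hebey}) without further argument. You additionally verify the hypotheses of that theorem --- the uniform $H^1$ bound and the Palais--Smale property at level $\tfrac1n r_\infty$ --- which the paper leaves implicit, so your write-up is if anything more complete.
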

As the consequence of Proposition \ref{struwe}, we know that
\begin{equation}\label{star2}
r_\infty=(E (u_\infty)^{n/2}+mY(S^n)^{n/2})^{2/n}.
\end{equation}

Once we have this result, we may easily prove Theorem \ref{main}.
\begin{proof}
First, we remark that $u_\infty$ is a smooth solution so that if it has a zero point in $M$, by the maximum principle we know that it is identically zero.

Second, by \eqref{star2}, along the flow \eqref{yamabe} we have
$$
Y_\psi(M)\leq r_\infty\leq r(t)<r(0)\leq Y(S^n).
$$

Third, this then implies that $m=0$. Then we have $u_\infty>0$ on $M$, which is the limit of the flow with initial data $u_0$.
Thus we have proved Theorem \ref{main}.
\end{proof}

\section{Convergence part of Yamabe-type flows for negative or flat cases}\label{sect4}

In this section we prove Theorem \ref{thm2}. In the proof below, we may let $p>1$ be any number and we assume $\lambda_1(\psi)<0$.

We now prove Theorem \ref{thm2}.
\begin{proof}

Define the Yamabe-type quotient
$$
Q(g(t))=\frac{\int_M R_\psi(t) dv_t}{(\int_M u(t)^{p+1}dv)^{2/(p+1)}},
$$
where $g(t)=u(t)^{p-1}g$ and we may let
$$
dv_t=u(t)^{p+1}dv.
$$

Along the Yamabe-type flow, we may assume that
$$
\int_Mdv_t=1.
$$
In this case we have
$$
Q(g(t))=r_\psi(t)=\int_M(c_n|\nabla u|^2+\psi(x)u^2)dv/ \int_Mu^{p+1}dv,
$$
i.e.,
$$
Q(g(t))=r_\psi(t)=\int_M(c_n|\nabla u|^2+\psi(x)u^2)dv\geq \lambda_1(\psi)\int u^2dv.
$$
Then by $\int u^2\leq 1$,  we know that
$$
r_\psi(t)\geq \lambda_1(\psi)
$$
for all $t>0$.

Recall that  $r_\psi(t)$ is decreasing in $t$.
Hence, as expected, in case $\lambda_1(\psi)\leq 0$, it is not difficult to see that the factor $u(t)$ is uniformly bounded above and below and as $t\to\infty$, the flow converges to a metric of constant pseudo-scalar curvature. This will be done below.

Recall that $\psi(x)<0$ on $M$. Then
$$\lambda_1(\psi)\leq r_\psi(t)\leq r_\psi(0).$$
Let $u_m(t)=\inf_Mu(t)$. Then by (\ref{yamabe2}) we have
\begin{equation}\label{minimum}
\partial_t u_m^p\geq -\sup_M\psi(x)u_m+r_\psi(t) u_m^p.
\end{equation}
Then
$$
\partial_t u_m^p\geq- \sup_M\psi(x)u_m+ \lambda_1(\psi) u_m^p.
$$
This can be considered in two cases.

\textbf{Case 1}. If $-\sup_M\psi(x)u_m+ \lambda_1(\psi) u_m^p\geq 0$,
then
$$
 u_m^p(t)\geq  u_m^p(0).
$$

\textbf{Case 2}. If $-\sup_M\psi(x)u_m+ \lambda_1(\psi) u_m^p\leq 0$,
then we have
$$
-\sup_M\psi(x)u_m\leq -\lambda_1(\psi) u_m^p.
$$
and
$$
u_m^{p-1}(t)\geq \sup_M\psi(x)/\lambda_1(\psi).
$$
This implies that $u_m(t)$ is uniformly bounded away from zero that
\begin{equation}\label{minimum+1}
u_m^{p-1}(t)\geq \min\{\sup_M\psi(x)/\lambda_1(\psi), u_m^{p-1}(0)\}.
\end{equation}

Similarly for  $u_M(t)=\sup_Mu(t)$, we have
\begin{equation}\label{maximum}
\partial_t u_M^p\leq -\inf_M\psi(x)u_M+ r_\psi(t) u_M^p,
\end{equation}
and by $r_\psi(t)\leq r_\psi(0)$,
$$
\partial_t u_M^p\leq -\inf_M\psi(x)u_M+ r_\psi(0) u_M^p.
$$
We may write this differential inequality as
$$
\frac{p}{p-1}\partial_t u_M^{p-1}\leq -\inf_M\psi(x)+ r_\psi(0) u_M^{p-1}.
$$
When $-\inf_M\psi(x)+ r_\psi(0) u_M^{p-1}\leq 0$, we have
$$
\partial_t u_M^{p-1}\leq 0.
$$
When $-\inf_M\psi(x)+ r_\psi(0) u_M^{p-1}\geq 0$, we have
$$
u_M^{p-1}\leq \inf_M\psi(x)/r_\psi(0).
$$
Then we have
$$
u_M^{p-1}\leq max(\inf_M\psi(x)/r_\psi(0),u_M^{p-1}(0)).
$$
 Then we can get the convergent of the flow $g(t)$ at $t=\infty$.

We claim that $r_\psi(t)$ will eventually become negative, even if this may not be the case at the initial time.
Assume that $r_\psi(t)\geq 0$ for all time, then by (\ref{minimum}) we have
$$
\partial_t u_m^p\geq -\sup_M\psi(x)u_m> 0
$$
and this implies that as $t\to\infty$,
$$
 u_m^p(t)\to \infty,
$$
which is impossible since the volume of $g(t)$ is fixed. Hence, we may choose $t_0>0$ such that
$$
r_\psi(t_0)< 0
$$
and then
$$
r_\psi(t)\leq r_\psi(t_0)< 0
$$
for all $t\geq t_0$. By (\ref{maximum}) we have for $t\geq t_0$,
$$
 u_M^p(t)\leq \max\{u_M^p(t_0), -\frac{1}{r_\psi(t_0)}\sup \psi(x)\}.
$$
This together with (\ref{minimum+1}) implies that $u$ is uniformly bounded from above and away from zero.
Then as in \cite{Y}, we can show that $g(t)$ converges smoothly at an exponential rate to a limit metric with negative pseudo-scalar curvature.

\end{proof}

For comparison, we give an outline of the proof of global existence and convergence result, Theorem \ref{thm3},
in case for any $p>1$ with $\lambda_1(\psi)= 0$ and $\psi(x)=0$ on $M$.

\begin{proof}
The global existence of the flow can be done as in the proof of Theorem \ref{thm1}. So we need only consider the convergence part.
Note that in this case, we have  $r_\psi(t)\geq 0$.
If $r_\psi(0)=0$, then we have $r_\psi(t)= 0$ for all $t>0$ and $r_\psi(t)=0$ for all time. Thus we may assume that
$r_\psi(0)>0$.

By (\ref{minimum}) we have
\begin{equation}\label{rate}
\frac{u_m^p(t)}{u_m^p(0)}\geq \exp(\int_0^tr_\psi(t)).
\end{equation}
We need to get an uniform upper bound for
$u_M(t)=\sup_Mu(t)$.  Recall that
\begin{equation}\label{maximum1}
\partial_t u_M^p\leq  r_\psi(t) u_M^p.
\end{equation}
We use the Gronwall inequality to get
$$
u_M^p(t)/u_M^p(0)\leq \exp(\int_0^tr_\psi(s)ds).
$$
Then
we have
$$
u_M^p(t)/u_M^p(0)\leq \exp(\int_0^tr_\psi(s)ds)\leq \frac{u_m^p(t)}{u_m^p(0)}.
$$
This is the global Harnack inequality.
Therefore, using $\int_M u(t)^{p+1}dv=1$, we have the uniform smooth estimations for $u$. Using \eqref{rate} we know that
$r_\psi(t)\to 0$ as $t\to \infty$.

Multiplying (\ref{yamabe2}) by $u^{1-p}\Delta u$ and integrating, we have
\begin{equation}\label{time}
p \frac{d}{d t} \int_{M}c_n\left|\nabla u\right|^{2} d v+2 \int c_n\left|\Delta u\right|^{2} u^{1-p} d v =2  r_\psi(t) \int_{M}c_n\left|\nabla u\right|^{2} d v
\end{equation}

Using the inequality
\begin{equation}
\int_{M}c_n\left|\Delta u\right|^{2} d v \geq c \int_{M}\left|\nabla u\right|^{2} d v
\end{equation}
for some uniform constant $c>0$ and the fact that $r_\psi(t)\to 0$ as $t\to\infty$ we have
\begin{equation}\label{grad}
\int_{M}\left|\nabla u\right|^{2} d v\leq C e^{-c t}
\end{equation}
Integrating (\ref{time}) in time variable we have for any $T>0$,

\begin{equation}
\int_{T}^{\infty} \int_{M}\left|\Delta u\right|^{2} d v \leq C e^{-c T}
\end{equation}
which says that
\begin{equation}
\int_{T}^{\infty} \int_{M} R_\psi^{2} d v \leq C e^{-c T}
\end{equation}
Then for $t\in [T, T+1)$,
$$
\int_{M} R_\psi^{2} d v \leq C e^{-c t}.
$$
Since $r_\psi(t)$ decreases we have
\begin{equation}
r_\psi(t) \leq C e^{-c t}
\end{equation}
for all $t>0$. By (\ref{grad}) and Poincare inequality we know that $u^p$ converges to its average exponentially in the $L^2$ norm. It follows that $u(t)$ converges exponentially to some limit positive constant.
\end{proof}


\begin{thebibliography}{20}



\bibitem{A82}
T. Aubin,
\newblock {\em Non-linear Analysis on manifolds},
\newblock Springer, New
York, 1982.


\bibitem{A76}
T. Aubin,
\newblock \emph{Equations differentielles non lineaires et Probleme de Yamabe concernant
la courbure scalaire},
\newblock J. Math. Pures et Appl. 55, (1976) 269-296.



\bibitem{B}
P. Baird, A.Fardoun, and R.Regbaoui,
\newblock \emph{ The evolution of the scalar curvature on surface to a prescribed function},
\newblock
 Ann. Scula Norm. Sup. Pisa Cl. Sci (5), Vol. III(2004)17-38.

\bibitem{BB}
A. Bahri and H. Brezis, 
\newblock Nonlinear elliptic equations on Riemannian manifolds with Sobolev critical exponent, 
\newblock in Topics in Geometry (S. Gindinkin ed.), Birkhauser,  1996, p. 1-100.


\bibitem{B97}
A. Besse,
\newblock \emph{Einstein Manifolds},
\newblock Springer, Berlin, 1987.

\bibitem{B05}
S. Brendle,
\newblock\emph{Convergence of the Yamabe flow for arbitrary initial energy},
\newblock J. Differential Geom., 69(2005)217-278.

\bibitem{BN83}
H.Brezis and L.Nirenberg,
\newblock \emph{ Positive solutions of nonlinear elliptic equations involving critical
Sobolev exponents},
\newblock Comm. on Pure and App. Math., Vol. 36,1983, pp. 437-417.

\bibitem{CY}
 S.Y.A. Chang and P. C. Yang, 
 \newblock \emph{Prescribing Gaussian curvature on $S^2$}, 
\newblock Acta Math., 159, (1987), 215-259.

\bibitem{CD}
W. Chen and W.-Y. Ding, 
\newblock  Scalar curvatures on $S^2$,
\newblock Trans. Amer. Math. Soc., 303, (1987), 365-382.

\bibitem{DM} M.C.Dallaston and S.W.Mccue,
\newblock \emph{a curve shortening flow rule for closed embedded plane curves with a prescribed rate of change in enclosed area}.
    \newblock  Pro. R. Soc. A,472(2016),20150629.

\bibitem{DL}
W.-Y. Ding and J. Liu, \newblock \emph{A note on the prescribing Gauss curvature on surfaces}, 
\newblock Trans. Amer. Math. Soc.,
347, (1995), 1059-1066.

\bibitem{E}
J.Escobar,
\newblock \emph{Positive solutions for some semilinear elliptic
equations with critical Sobolev exponents},
\newblock Communications on Pure and Applied Mathematics, Vol. XL 623-657(1987)

\bibitem{Hebey}
O.Druet, E.Hebey, F.Robert,
\newblock \emph{Blow-up theory for elliptic PDEs in Riemannian geometry}.
\newblock Princeton University Press, Princton and Oxford, 2004.

\bibitem{GA} M.E.Gage,
\newblock\emph{ On an area-preserving evolution equation for plane curves}.
\newblock  in nonlinear problems in Geometry,
\newblock Contemp.Math.51,AMS,Proviedence,RI,1986,pp.51-62.

\bibitem{H82}
R.Hamilton,
\newblock  \emph{Three-manifolds with positive Ricci curvature},
\newblock
 J. Differential Geom., 2(1982), 255-306



\bibitem{L}
G.Lieberman,
\newblock\emph{ second order parabolic differential equations},
\newblock
World scientific, Singapore, 1996.

\bibitem{F}
A. Friedman,
\newblock \emph{Partial Differential Equations of Parabolic Type},
\newblock Prentice-Hall, Englewood Cliffs, 1964, MR 0181836, Zbl0144.34903.

\bibitem{MC} Li Ma, L.Cheng,
\newblock \emph{Global solutions to norm-preserving non-local flows of porous media type},
\newblock Proceedings of the Royal Society of Edinburgh, 143A, 871-880, 2013

\bibitem{MC1} L.Ma and L.Cheng,
\newblock \emph{A non-local area preserving curve flow},
\newblock Geom. Dedicatae 171(2014),pp 231-247.

\bibitem{M} Li Ma, \newblock
\emph{Global Yamabe flow on asymptotically flat manifolds},
\newblock arxiv, 2021.

\bibitem{S}
R.Schoen,
\newblock \emph{Conformal deformation of a Riemannian metric to constant scalar curvature},
\newblock J. Diff.
Geom., Vol. 20, 1984, pp. 479-495.

\bibitem{Stt}
H. Schwetlick and M. Struwe,
\newblock \emph{Convergence of the Yamabe flow for large energies,}
\newblock
J. Reine Angew. Math. 562 (2003) 59-100


\bibitem{St} M. Struwe,
\newblock \emph{ A global compactness result for elliptic boundary value problems
involving limiting nonlinearities},
\newblock Math. Z. 187 (1984) 511-517,

\bibitem{T2}
N.S.Trudinger,
\newblock \emph{Pointwise estimates and quasilinear parabolic equations},
\newblock Comm. Pure and Appl. Math.,Vol.XXI, 205-226(1968)

\bibitem{Y}
R.Ye,
\newblock  \emph{Global existence and convergence of Yamabe flow},
\newblock J. Differential Geom., 39(1994)35-50.

\end{thebibliography}
\end{document}